\documentclass{amsart}


\usepackage[margin=1in]{geometry}
\usepackage{amsmath, amssymb}
\usepackage[all,cmtip]{xy}
\usepackage{color,graphicx}
\usepackage{amsthm}
\usepackage{amsfonts}
\usepackage{amscd}
\usepackage{comment}
\usepackage{mathrsfs}
\usepackage[toc,page]{appendix}
\usepackage[utf8]{inputenc}
\usepackage{tikz, subfigure}
\usepackage[mathscr]{euscript}
\usepackage{enumitem,kantlipsum}

\usepackage[bookmarks=true, bookmarksopen=true,%
bookmarksdepth=3,bookmarksopenlevel=2,%
colorlinks=true,%
linkcolor=blue,%
citecolor=blue,%
filecolor=blue,%
menucolor=blue,%
urlcolor=blue]{hyperref}
\usepackage{cleveref}


\def\EE{\mathbb{E}}

\def\HH{\mathbb{H}}


\def\calG{\mathcal{G}}




\newcommand\frS{\mathfrak{S}}







\newcommand\aff{\textup{aff}}

\newcommand{\Tr}{\textup{Tr}}

\newcommand\Map{\textup{Map}}




\newcommand\nc{\newcommand}
\nc\on{\operatorname}
\nc\ol{\overline}
\nc\ul{\underline}
\nc\us[1]{\underline{\smash{#1}}}

\nc\oo{\infty}

\nc\Cone{\mathit{Cone}}
\nc\ssupp{\mathit{ss}}
\nc\risom{\stackrel{\sim}{\to}}
\nc\Sh{\textup{Sh}}
\nc\un{\diamondsuit}
\nc\orient{\mathit{or}}
\nc\sing{\mathit{sing}}
\nc\MF{\on{MF}}
\nc\inthom{\mathit{Hom}}

\newcommand{\Z}{\mathbb{Z}}

\newcommand{\colim}{\textup{colim}}

\newtheorem{thm}[equation]{Theorem}
\newtheorem{prop}[equation]{Proposition}
\newtheorem{lem}[equation]{Lemma}

\theoremstyle{definition}
\newtheorem{defn}[equation]{Definition}

\newtheorem{eg}[equation]{Example}

\newtheorem{rmk}[equation]{Remark}
\newtheorem{conj}[equation]{Conjecture}

\numberwithin{equation}{section}

\begin{document}

\nc{\Conn}{\mathrm{Conn}}

\nc{\fg}{\mathfrak g}
\nc{\fh}{\mathfrak h}

\nc{\cN}{\mathcal N}


\title[A Colimit of Traces of Reflection Groups]{A Colimit of Traces of Reflection Groups}

\author{Penghui Li}

\address{Institute of Science and Technology Austria}
\email{pli@ist.ac.at}

\begin{abstract}
Li-Nadler proposed a conjecture about traces of Hecke categories, which implies the  semistable part of the Betti Geometric Langlands Conjecture of Ben-Zvi-Nadler in genus 1. We prove a Weyl group analogue of this conjecture. Our theorem holds in the natural generality of reflection groups in Euclidean or hyperbolic space.
\end{abstract}

\maketitle


\section{Introduction}

Let $W$ be a reflection group in Euclidean or hyperbolic space. For $I$ a facet, denote by $W_I$ the subgroup fixing $I$.  For $C$ a chamber, denote by $\mathscr{F}_C$ be the category (or poset) of faces in $\overline{C}$ ($:=$ the closure of $C$). We view $W$ (with discrete topology) as an algebra object in $\mathscr{S}:=$ the $\infty$-category of topological spaces, and denote its trace by $\Tr(W) \in \mathscr{S}$. Our main theorem is:
\begin{thm} 
	\label{main}
	The natural map 
	$$\xymatrix{ \textup{colim}_{I \in \mathscr{F}_C^{op}} \Tr(W_I)  \ar@{^{(}->}[r]  &  \Tr(W)          }$$
	is fully-faithful.
\end{thm}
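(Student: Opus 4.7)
The target $\Tr(W)$ decomposes over conjugacy classes as $\bigsqcup_{[w]_W} BZ_W(w)$. Fix $[w]_W$, set $X = [w]_W \subseteq W$ (a $W$-set under conjugation), and write $P_{[w]}$ for the preimage in the source colimit of the component $BZ_W(w) \subseteq \Tr(W)$:
$$P_{[w]} \;=\; \colim_{I \in \mathscr{F}_C^{op}} (X \cap W_I)/\!/W_I.$$
Full-faithfulness amounts to showing $P_{[w]} \simeq BZ_W(w)$ whenever $P_{[w]} \neq \emptyset$. Using a single contractible free $W$-space $EW$ as a simultaneous model of every $EW_I$, one has $(X \cap W_I)/\!/W_I \simeq EW \times_{W_I}(X \cap W_I) \simeq EW \times_W \bigl(W \times_{W_I}(X \cap W_I)\bigr)$. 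Since $EW \times_W(-)$ is itself a colimit and hence preserves colimits, we may pull it outside and reduce to proving that the $W$-equivariant map
$$\Phi \colon \colim_{I \in \mathscr{F}_C^{op}} W \times_{W_I}(X \cap W_I) \;\to\; X, \qquad (g,w') \mapsto gw'g^{-1},$$
is a weak equivalence of $W$-spaces whenever some $X \cap W_I$ is non-empty; the identity $P_{[w]} \simeq EW \times_W X = X/\!/W \simeq BZ_W(w)$ then follows.

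\textbf{Fibers of $\Phi$.} Since $\mathscr{S}$ is an $\infty$-topos, colimits are universal, so for $x \in X$
$$\Phi^{-1}(x) \;\simeq\; \colim_{I \in \mathscr{F}_C^{op}} \{gW_I \in W/W_I : x \in W_{gI}\} \;=\; \colim_{I \in \mathscr{F}_C^{op}}(W \cdot I) \cap \mathscr{F}^x,$$
where $\mathscr{F}^x$ denotes the poset of all faces of $V$ (i.e., $W$-translates of elements of $\mathscr{F}_C$) pointwise fixed by $x$. Every face of $V$ has a unique $W$-orbit representative in $\mathscr{F}_C$, so the Grothendieck construction of the set-valued diagram $I \mapsto (W \cdot I) \cap \mathscr{F}^x$ is canonically the poset $\mathscr{F}^x$ itself, and therefore $\Phi^{-1}(x) \simeq |\mathscr{F}^x|$. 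The order complex $|\mathscr{F}^x|$ is the barycentric subdivision of the CW subcomplex $V^x \subseteq V$, hence homotopy equivalent to $V^x$.

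\textbf{Conclusion.} The fixed set $V^x$ of an isometry $x$ of Euclidean or hyperbolic space is an affine or totally geodesic subspace, hence contractible when non-empty and empty otherwise. Each fiber of $\Phi$ is therefore contractible or empty, so $\Phi$ is a $W$-equivariant weak equivalence whenever $P_{[w]} \neq \emptyset$, giving $P_{[w]} \simeq BZ_W(w)$ in that case and $P_{[w]} = \emptyset$ otherwise. In particular each connected component of the source maps by equivalence onto a component of $\Tr(W)$ indexed by an elliptic conjugacy class, and distinct components map to distinct targets -- precisely the definition of fully-faithful in $\mathscr{S}$. \emph{Main obstacle:} the identification $|\mathscr{F}^x| \simeq V^x$, which rests on the fact that the reflection arrangement on $V$ induces a regular CW decomposition on $V^x$ whose face poset is $\mathscr{F}^x$. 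Granted this and the universality of colimits in $\mathscr{S}$, the remainder of the argument is essentially formal.
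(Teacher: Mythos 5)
Your argument is correct in outline, but it takes a genuinely different route from the paper. The paper realizes $\Tr(W_I)$ and $\Tr(W)$ as homotopy types of the topological groupoids $[(\coprod_{w\in W_I}X_I^w)/W_I]$ and $[(\coprod_{w\in W^f}X^w)/W]$ (stars and fixed loci), checks that the resulting diagram consists of open embeddings whose underlying discrete groupoids already form a colimit diagram, and then invokes Lurie's $\infty$-categorical Seifert--van Kampen theorem to promote this to a colimit of homotopy types; contractibility of the $X_I^w$ and $X^w$ then collapses everything onto $\colim W_I/W_I \to W^f/W \hookrightarrow W/W$. You instead stay entirely in $\mathscr{S}$: you decompose by $W$-conjugacy class, induct up to $W$-spaces so that $EW\times_W(-)$ can be pulled out of the colimit, and use universality of colimits to compute the homotopy fiber of the assembly map $\Phi$ over $x$ as the realization of the Grothendieck construction of $I\mapsto (W\cdot I)\cap\mathscr{F}^x$, which you correctly identify with the order complex $|\mathscr{F}^x|$ (this identification silently uses that the stabilizer of a facet equals its pointwise fixator and that $\overline C$ is a strict fundamental domain --- both are Proposition~\ref{fundamental domain}, and worth citing). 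Your approach avoids topological groupoids and Seifert--van Kampen altogether and makes the component structure of the answer explicit; its geometric crux, the homotopy equivalence $|\mathscr{F}^x|\simeq V^x$ with $V^x$ a contractible totally geodesic subspace, is of exactly the same nature as the inputs the paper uses without proof (contractibility of $X^w$, $X_I^w$, and $|\mathscr{F}_J^{op}|\simeq\overline J$), and holds by the standard order-complex/barycentric-subdivision argument once one notes that $V^x$ is a union of closed faces (if $x$ fixes a point of $J$ it stabilizes, hence pointwise fixes, $J$); the only care needed is that cells may be unbounded, which affects the homeomorphism but not the homotopy equivalence. So I consider your proof complete modulo these standard facts, and a legitimate alternative to the paper's.
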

The proof uses Lurie's $\infty$-categorical Seifert-Van Kampen theorem and basic properties of reflection groups. 

When $W=W_{\aff}$ the affine Weyl group of a simply-connected reductive group $G$, Theorem~\ref{main} confirms a Weyl group analogue of the following conjecture in \cite{LN}. Let $G$ be a simply-connected reductive algebraic group, $LG$ the loop group of $G$. Let $C$  be an affine alcove. For each face $I$ of $C$, denote by $G_I$ the Levi of the parahoric subgroup of $LG$ corresponding to $I$. Let $\mathscr{H}_I$ be the Hecke category of $G_I$, and $\mathscr{H}_{\aff}$ be the affine Hecke category. 
\begin{conj}[{\cite[Claim 1.12]{LN}}]
	\label{conjecture}
	The natural map of $\infty$-categories 	
	$$\xymatrix{ \textup{colim}_{I \in \mathscr{F}_C^{op}} \Tr(\mathscr{H}_I)  \ar[r]  &  \Tr(\mathscr{H}_{\aff})          }$$
	is fully-faithful.
\end{conj}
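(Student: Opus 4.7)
My plan is to categorify the approach taken for Theorem~\ref{main}. The Weyl group theorem is proved via Lurie's $\infty$-categorical Seifert-Van Kampen theorem applied to the facet cover of $\overline{C}$; the natural categorified analogue is to invoke the version of this result in the $\infty$-category of presentable stable $\infty$-categories. The goal is to present $\Tr(\mathscr{H}_{\aff})$ as a colimit of the $\Tr(\mathscr{H}_I)$ along the very diagram $\mathscr{F}_C^{\op}$ appearing in Theorem~\ref{main}, from which the fully-faithfulness is immediate. A guiding principle is that facet data should govern the affine Hecke trace in exactly the way it governs $\Tr(W_{\aff})$, with Theorem~\ref{main} acting as the decategorified shadow.

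\textbf{Geometric identification of the traces.} Following Ben-Zvi-Francis-Nadler one has $\Tr(\mathscr{H}_H) \simeq \Dmod(H/H)$ for a reductive $H$, so $\Tr(\mathscr{H}_I) \simeq \Dmod(G_I/G_I)$ for each facet $I$. For the affine Hecke category I would construct a loop-group analogue, interpreting $\Tr(\mathscr{H}_{\aff})$ as a category of affine character sheaves $\Dmod(LG/LG)^{\textup{tame}}$ on tame conjugacy classes. The canonical functor $\Tr(\mathscr{H}_I) \to \Tr(\mathscr{H}_{\aff})$ should then arise from pull-push along the correspondence $G_I/G_I \leftarrow P_I/P_I \to LG/LG$ induced by the parahoric projection $P_I \twoheadrightarrow G_I$ and the inclusion $P_I \hookrightarrow LG$.

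\textbf{Facet stratification.} The decisive step is to exhibit a facet stratification of $\Tr(\mathscr{H}_{\aff})$: to each tame conjugacy class one assigns the Bruhat-Tits facet-type of its minimal enclosing parahoric, a categorified affine Jordan decomposition that mirrors the $W_{\aff}$-orbit stratification underlying Theorem~\ref{main}. Provided each stratum is identified with $\Tr(\mathscr{H}_I)$ and the gluing functors match those coming from face inclusions $J \subset \overline{I}$, the categorified Seifert-Van Kampen formalism yields the fully-faithful colimit. To verify these identifications I would use the affine Grothendieck-Springer picture, studying the fibration from the parahoric-flag variety to $\Tr(\mathscr{H}_{\aff})$ stratum-by-stratum.

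\textbf{Main obstacle.} The chief obstacle is the ind-pro-infinite-dimensional nature of $LG$ and of $\mathscr{H}_{\aff}$: constructing a geometric or $\infty$-categorical model for $\Tr(\mathscr{H}_{\aff})$ with strong enough formal properties (so that facet-indexed colimits actually compute it, and so that ``tameness'' is preserved by the relevant correspondences) is already nontrivial. Even granting such a model, the elementary Euclidean/hyperbolic covering argument available for the reflection-group theorem has no direct analogue in the categorified setting --- one must replace it by a loop-group geometric input, likely via Lusztig's generalized Springer theory on each Levi $G_I$ together with a compatibility across face inclusions, and carefully manage the completions inherent in passing between $\mathscr{H}_I$ and $\mathscr{H}_{\aff}$.
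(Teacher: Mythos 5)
The statement you are addressing is Conjecture~\ref{conjecture}, which the paper does \emph{not} prove: it is an open conjecture of Li--Nadler, and the paper only establishes its decategorified Weyl-group analogue (Theorem~\ref{main}). There is therefore no proof in the paper to compare against; what can be assessed is your strategy, both on its own terms and against the actual argument for Theorem~\ref{main} and the expectations the paper records.

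Your proposal is a research program rather than a proof: every load-bearing step is left conditional (``I would construct'', ``should then arise'', ``provided each stratum is identified''), and you yourself flag the main obstructions, so nothing is actually established. Beyond incompleteness, there is a concrete problem with the stated goal. You aim to exhibit $\Tr(\mathscr{H}_{\aff})$ as the colimit of the $\Tr(\mathscr{H}_I)$, so that fully-faithfulness becomes automatic. But the conjecture asserts only fully-faithfulness, and the paper's own remark explains why an equivalence is not to be expected: already for $W = W_{\aff}$ the map $\colim_I W_I/W_I \to W/W$ fails to be essentially surjective ($\pi_0$ of the source is finite while $\pi_0(W/W)$ contains the infinite set of dominant coweights), and the proof of Theorem~\ref{main} identifies the image with the \emph{proper} full subgroupoid $W^f/W$ of elements fixing some facet. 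Correspondingly, your proposed ``facet stratification'' of all tame conjugacy classes by minimal enclosing parahoric cannot cover all of $\Tr(\mathscr{H}_{\aff})$ by strata equivalent to the $\Tr(\mathscr{H}_I)$; at best it describes a full subcategory (the expected ``semistable'' part), and identifying that subcategory and its complement is essentially the content of the conjecture, not an input to it. Finally, the technical engine of the paper's proof --- Lurie's Seifert--van Kampen theorem for open covers of topological groupoids, combined with the contractibility of the fixed loci $X^w_I$ --- has no off-the-shelf analogue for colimits of stable $\infty$-categories: a colimit of categories along fully faithful functors need not map fully faithfully to a given target, so even granting all of your identifications the concluding step does not follow formally. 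You would need a genuinely new gluing mechanism at the categorical level, which is precisely where the difficulty of the conjecture lies.
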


This conjecture comes from the consideration of Geometric Langlands. Roughly speaking, the Betti Geometric Langlands Conjecture \cite{BZN16} predicts the equivalence of two ($\infty$)-categories: the automorphic category $\mathscr{A}_g$ and the spectral category $\mathscr{B}_g$. As explained in \cite{LN}, The above conjecture implies that for genus $g=1$, one can embed the semistable automorphic category $\mathscr{A}_1^{ss} \subset \mathscr{A}_1$ fully-faithfully into $\mathscr{B}_1$, and hence implies this part of Geometric Langlands. Note that $W_I$ is the Weyl group of $G_I$,  Weyl groups are specializations of Hecke algebra, and Hecke algebras are decategorifications of Hecke categories. Hence Theorem~\ref{main} confirms an easier analogue of Conjecture~\ref{conjecture}.

 \begin{rmk} 
 	\begin{enumerate}[leftmargin=*]
 	\item 	
 	For a topological group $G$ acting on a topological space $X$, we denote by $X/G$ the topological space $X \times_G EG$, where $EG$ is a contractible space with free $G$ action. It is not hard to see that $\Tr(G) \simeq G/G$, for the adjoint action of $G$ (Proposition~\ref{trace of a group}). Denote $\bullet$ a single point.
 	 It is known that $\textup{colim}_{I \in \mathscr{F}_C^{op}} \bullet/W_I \simeq \bullet/W$ (see e.g. \cite{Li2}).  This equivalence sits inside Theorem~\ref{main} via the commutative diagram
	$$\xymatrix{\textup{colim}_{I \in \mathscr{F}_C^{op}} \bullet/W_I  \ar[r]^-{\sim}  \ar@{^{(}->}[d] &  \bullet/W  \ar@{^{(}->}[d]  \\
		 \textup{colim}_{I \in \mathscr{F}_C^{op}} W_I/W_I  \ar@{^{(}->}[r]  &  W/W         }$$
 	 where the vertical maps take $\bullet$ to $1$. A similar statement of the top equivalence for the Bruhat-Tits building was used to prove that the representation 
 	 category of a $p$-adic group has global dimension $\leq \dim(C)$ (see e.g. Bernstein's lectures on Representation of $p$-adic groups). It may be interesting to see the meaning of bottom arrow in $p$-adic representation theory.
 	 \item We can get the map in Theorem~\ref{main}  by applying $\Map(S^1,-)$ to the equivalence $\colim \bullet/W_I \simeq \bullet/W $. However, this resulting map is no longer an equivalence in general. This reflects the fact that $\Map(S^1,-) $ does not preserve colimit. I.e, the loop space are not calculated locally. To see a concrete example when the surjectivity fails: take $W=W_{\aff}$, then $\pi_0(LHS)$ is finite but $\pi_0(RHS) \supset \{\text{dominant coweights}\}$, which is infinite.
 	\end{enumerate}

\end{rmk} 

 \begin{eg}
 	We gives some examples of Theorem~\ref{main}. Denote by $\mathfrak{S}_n$ the symmetric group on $n$ letters, 
 	\begin{enumerate}[leftmargin=*]
 		\item $W$ is the Weyl group of a reductive algebraic group $G$. Then $\mathscr{F}_C^{op}$ has a final object $O$ the origin, and $W_O=W$. Hence Theorem holds trivially since the LHS is also $\Tr(W)$.
 		\item $W$ is the affine Weyl group of $SL_2$. $\mathscr{F}_C^{op}$ is the category $\bullet \leftarrow \bullet \to \bullet$. $\frS_2/\frS_2 \simeq \bullet/\mathfrak{S}_2 \coprod \bullet/\mathfrak{S}_2$ \\
 		$LHS =  \colim  
 		\xymatrixcolsep{0.05in} 
 		\xymatrix{  & \bullet \ar[dl] \ar[dr] &  \\
 		 \frS_2/\frS_2    &  &  \frS_2/\frS_2  
 	}      \simeq 
  \colim  
 \xymatrixcolsep{0.05in}
 \xymatrix{  & \bullet \ar[dl] \ar[dr] &  \\
 	\bullet/\frS_2    &  &  \bullet/\frS_2 }
 \coprod \;\;\;
   \colim  
\xymatrixcolsep{0.05in}
 \xymatrix{ 	 
 	 &  &  \\
 	\bullet/\frS_2    &  &  \bullet/\frS_2 }
    \\
\simeq \bullet/W  \coprod  \bullet/\mathfrak{S}_2   \coprod \bullet/\mathfrak{S}_2      \to W/W$

The image is the full subgroupoid consist of objects $1,s_1,s_0$, for $s_1,s_0$ two simple reflections in $W$. The map being fully-faithful reflects the fact that $s_1$ and $s_0$ are not conjugate in $W$, and the centralizer of each is $\mathfrak{S}_2$.
  \item $W$ is the affine Weyl group of $SL_3$. Note that $\frS_3/\frS_3 \simeq \bullet/\frS_3 \coprod \bullet/\frS_2 \coprod \bullet/(\mathbb{Z}/3)$. \\
$LHS= \colim  \xymatrixrowsep{0.05in}
  \xymatrixcolsep{-0.15in}
  \xymatrix{
  	&    &  \frS_3/\frS_3 \ar@{<-}[dddl] \ar@{<-}[dddr] \ar@{<-}[dddd] &    &        \\
  	&    &    &   &     \\
  	&    &    &   &     \\
  	& \frS_2/\frS_2
  	\ar@{<-}[dr]  &   & \frS_2/\frS_2 \ar@{<-}[dl] &     \\
  	&    &  \bullet &    &        \\
  	&    &    &    &        \\
\frS_3/\frS_3   \ar@{<-}[uuur] \ar@{<-}[rr] \ar@{<-}[uurr]   &    &  \frS_2/\frS_2  \ar@{<-}[uu]
  	&      & \frS_3/\frS_3  \ar@{<-}[uuul] \ar@{<-}[ll] \ar@{<-}[uull]
}   \\
\simeq  \colim \xymatrixrowsep{0.05in}
\xymatrixcolsep{-0.10in}
\xymatrix{
	&    &  \bullet/\frS_3 \ar@{<-}[dddl] \ar@{<-}[dddr] \ar@{<-}[dddd] &    &        \\
	&    &    &   &     \\
	&    &    &   &     \\
	& \bullet/\frS_2
	\ar@{<-}[dr]  &   & \bullet/\frS_2 \ar@{<-}[dl] &     \\
	&    &  \bullet &    &        \\
	&    &    &    &        \\
	\bullet/\frS_3   \ar@{<-}[uuur] \ar@{<-}[rr] \ar@{<-}[uurr]   &    &  \bullet/\frS_2  \ar@{<-}[uu]
	&      & \bullet/\frS_3  \ar@{<-}[uuul] \ar@{<-}[ll] \ar@{<-}[uull]  }
\coprod  \;\;
\colim \xymatrixrowsep{0.05in}
\xymatrixcolsep{-0.10in}
\xymatrix{
	&    &  \bullet/\frS_2 \ar@{<-}[dddl] \ar@{<-}[dddr]  &    &        \\
	&    &    &   &     \\
	&    &    &   &     \\
	& \bullet/\frS_2
	&   & \bullet/\frS_2 &     \\
	&    &   &    &        \\
	&    &    &    &        \\
	\bullet/\frS_2   \ar@{<-}[uuur] \ar@{<-}[rr]    &    &  \bullet/\frS_2 
	&      & \bullet/\frS_2  \ar@{<-}[uuul] \ar@{<-}[ll]   } 
 \coprod  \;\; \colim
 \xymatrixcolsep{-0.05in}
 \xymatrix{
 	&    &  \bullet/(\Z/3)  &    &        \\
 	&    &    &   &     \\
 	&    &    &   &     \\
 	& 	&   &  &     \\
 	&    &   &    &        \\
 	&    &    &    &        \\
 	\bullet/(\Z/3)    &    &  	&      & \bullet/(\Z/3)    } 
 \\
 \simeq  \bullet/W \; \coprod \; (\bullet/\frS_2 \times \partial C) \; \coprod  \; (\bullet/(\mathbb{Z}/3))^{\coprod 3} . $ 
 
 The second factor $\bullet/\frS_2 \times \partial C$ can be identified as the full subgroupoid in $W/W$ consists of reflections: let $s_I \in W$ be the reflection correpsonding to a face $I$ of $C$. The centralizer $C_W(s_I) \simeq {<s_I>} \times X_*(Z(G_I)) \simeq \frS_2 \times \mathbb{Z}.$ Hence the subgroupoid at $s_I$ is equivalent to $\bullet/\frS_2 \times \bullet/\mathbb{Z} \simeq \bullet/\frS_2 \times S^1 \simeq \bullet/\frS_2 \times \partial C.$ Also note that all reflections are conjugate in this case.
 \item Let $W$ be the triangle group $(2,3,\infty)$. It is a reflection group in hyperbolic plane. \\
 $LHS = \colim \xymatrix{ \frS_2/\frS_2 \ar[d] &  \bullet \ar[l] \ar[r] \ar[d] &  \frS_2/ \frS_2 \ar[d] \\
 	 (\frS_2 \times \frS_2)/(\frS_2 \times \frS_2) & \frS_2/\frS_2 \ar[l] \ar[r]  &  \frS_3/\frS_3 } \\
   = \colim \{ \xymatrix{
 	 (\frS_2 \times \frS_2)/(\frS_2 \times \frS_2) & \frS_2/\frS_2 \ar[l] \ar[r]  &  \frS_3/\frS_3 }  \} \\
  = \bullet/W \coprod (\bullet/(\frS_2 \times \frS_2)) ^{\coprod 3} \coprod \bullet/(\mathbb{Z}/3) $ \\
   We see that in this case, $C_{W_I}(w) = C_W(w)$, for any $w \in W_I$.

	\end{enumerate}	
 \end{eg}

\section{Preliminaries}
\subsection{Discrete groups generated by reflections}
References for this section are \cite[V]{Bour} and \cite{Vin88}. 
Let $X$ be an Eucliean space $\EE^n$ or hyperbolic space $\HH^n$. Let $\mathfrak{H}$ be a collection of hyperplanes in $X$. Let $W$ be the group generated by the orthogonal reflection along the hyperplanes $H \in \mathfrak{H}$. Assume that: 
\begin{enumerate}
\item For any $w \in W$ and $H \in \mathfrak{H}$, we have $w(H) \in \mathfrak{H}$.
\item  $W$ provided with discrete topology, acts properly on $X$.
\end{enumerate}
Given two points $x$ and $y$ of $X$, denote by $R\{x,y\}$ the equivalence relation: \\
For any hyperplane $H \in \mathfrak{H}$, either $x \in H$ and $y \in H$ or $x$ and $y$ are strictly on the same side of $H$. 

\begin{defn}
	\label{facet}
	\begin{enumerate}
		\item A \textit{facet} of $X$ is an equivalence class of the equivalence relation defined above.
		\item For two facet $I,J$, denote $I \leq J$ if $I \subset \overline{J}$. Then $\leq $ defines an partial order on the set of facets.
		\item A \textit{chamber} of $X$ is a facet $C$ that is maximal along this partial order.
		\item For any facet $J$, denote $\mathscr{F}_J$ be the category corresponding to the poset $\{ I |  I \leq J\}.$
		\item The \textit{star} of $I$ is $X_I:=\bigcup_{\{J | I \leq J\}} J \subset X$; and  $W_I:=\{w \in W: w|_I =id\}$.
	\end{enumerate}
\end{defn}

\begin{prop}
	\label{fundamental domain}
	\begin{enumerate}
		\item A facet is a polytope.
		\item For any chamber $C$, the closure $\overline{C}$ of $C$ is a fundamental domain for the action of $W$ on $X$, i.e., every orbit of $W$ in $X$ meets $\overline{C}$ in exactly one point. 
		\item For $I$ a facet, the group $W_I$ is generated by the reflections fixing $I$. 
		\item $W_I$ acts on $X_I$ with fundamental domain $X_I \cap \overline{C}$.
	\end{enumerate}
\end{prop}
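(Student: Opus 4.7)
My plan is to follow the classical Bourbaki--Vinberg treatment of reflection groups (\cite[Ch.~V]{Bour}, \cite{Vin88}), whose unifying input is that properness of the $W$-action forces the hyperplane arrangement $\mathfrak{H}$ to be \emph{locally finite}: every compact $K\subset X$ meets only finitely many $H\in\mathfrak{H}$, since otherwise infinitely many distinct reflections $r_H$ would map a small $W$-stable neighborhood of $K$ into itself, contradicting properness. Part~(1) is then immediate: near a point $x$ of a facet $F$, only finitely many hyperplanes are relevant, so locally $F$ is cut out by finitely many linear equalities together with strict inequalities inside an affine (resp.\ totally geodesic) subspace, making $F$ a relatively open convex polytope.

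For~(2), fix $c$ in the interior of $C$. Given $x\in X$, properness makes $W\cdot x$ discrete, so a closest orbit point $y=w(x)$ to $c$ exists; if $y\notin\overline{C}$, some wall $H\in\mathfrak{H}$ strictly separates $y$ from $c$, and $r_H(y)$ is then strictly closer to $c$, contradicting minimality. Hence $y\in\overline{C}$. For uniqueness, if $y,y'=w(y)\in\overline{C}$, I would induct on the length of a minimal gallery from $C$ to $w(C)$: each wall crossed by such a gallery separates $C$ from $w(C)$ and so must contain both $y$ and $y'$ (otherwise they would lie on opposite sides of it), allowing one to reflect and shorten $w$ while keeping $y$ fixed, and the induction drives $w$ into the identity.

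The heart of the statement is~(3), the Bourbaki parabolic subgroup theorem in geometric form. Given $w\in W_I$, both $C$ and $w(C)$ contain $I$ in their closures, so both lie in the star $X_I$. The technical crux is to show that $C$ and $w(C)$ can be joined by a minimal gallery $C=C_0,\dots,C_\ell=w(C)$ in which every intermediate $C_j$ contains $I$ in its closure --- a convexity-of-the-star assertion that is precisely where local finiteness of $\mathfrak{H}$ is indispensable. Granting this, each crossing wall contains $I$, exhibiting $w$ as a product of reflections fixing $I$. Part~(4) then follows immediately by applying~(2) to the reflection group $W_I$, which by~(3) is generated by the reflections in $\{H\in\mathfrak{H}:I\subset H\}$ and acts on the open star $X_I$; the chamber of this sub-arrangement through any interior point of $C$ is visibly $X_I\cap\overline{C}$. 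I expect the convexity-of-the-star step inside~(3) to be the main obstacle, and the place where the argument genuinely uses both the Euclidean/hyperbolic geometry and local finiteness of $\mathfrak{H}$; the remaining parts reduce to standard distance-minimization and induction on gallery length.
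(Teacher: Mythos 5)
Your outline follows the same classical route that the paper itself relies on: parts (2) and (3) are precisely the statements the paper cites from Bourbaki, Ch.~V, \S 3.3 (Theorem 2 and Proposition 2), and your distance-minimization and gallery-induction sketches are the standard proofs of those cited results, so there is no real divergence there --- though you should be aware that you leave the ``gallery inside the star'' step of (3) explicitly unproven, which is defensible only because it is contained in the citation. The one place where the paper gives an actual argument is part (4), and that is also where your proposal is too quick. Applying (2) to the reflection group $W_I$ yields a fundamental domain for $W_I$ acting on all of $X$, namely the closure $\overline{C'}$ of the chamber $C'$ of the sub-arrangement $\{H \in \mathfrak{H} : I \subset H\}$ containing $C$; to obtain the stated claim about the action on $X_I$ you still need (a) that $W_I$ preserves $X_I$ in the first place --- this is exactly what the paper checks, via $w(J) \geq w(I) = I$ for every facet $J \geq I$ --- and (b) that $\overline{C'} \cap X_I = \overline{C} \cap X_I$, which is the content hidden in your ``visibly'': one must verify that a hyperplane $H$ with $I \not\subset H$ cannot separate a point of $\overline{C'} \cap X_I$ from $C$, using that $I$ lies strictly on the $C$-side of such an $H$ and that any facet $J \geq I$ is either contained in $H$ (impossible, since then $I \subset \overline{J} \subset H$) or lies strictly on the same side as $I$. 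Neither verification is difficult, but both are needed, and (a) in particular is essentially the whole point of the paper's own proof of (4).
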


\begin{proof} (1) by defintion since each facet is a intersection of hyperplanes and half spaces. (2) See \cite[V.3.3 Theorem 2]{Bour}. (3) See \cite[V.3.3 Prop 2]{Bour}. (4) Let $J \geq I$ be a facet in $X_I$, then $w(J) \geq w(I) =I$, hence $w(J) \subset X_I$, and therefore $W_J$ acts on $\overline{C}$. For the second statement, note that $W_I$ is also a reflection group, hence by (2), $X_I \cap \overline{C}$ is a fundamental domain.

\end{proof}

 \subsection{Traces of algebras} Let $\mathscr{C}$ be a symmetric monoidal $\infty$-category such that all colimit exist. Let $A \in \text{Alg}(\mathscr{C})$ be an algebra object in $\mathscr{C}$. The \textit{trace (or Hochschild homology)}  of $A$ is by definition $\Tr(A):= A \otimes_{A \otimes A^{op}} A \in \mathscr{C}$. We view $\mathscr{S}$ as a symmetric monoidal $\infty$-category, where $\otimes$ is given by the Cartesian product.
 \begin{prop}
 	\label{trace of a group}
 	Let $G$ be a topological group. View $G \in \textup{Alg}(\mathscr{S})$, then $\Tr(G) \simeq G/G:= G \times_G EG$, where $G$ acts on $G$ by conjugation. 
\end{prop}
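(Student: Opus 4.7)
The plan is to identify both sides with the free loop space $\mathcal{L}(BG) = \Map(S^1, BG)$, where $BG$ is the classifying space of the topological group $G$.

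For the left-hand side, I would unfold $\Tr(G) = G \otimes_{G \otimes G^{\op}} G$ via the two-sided bar construction in $\mathscr{S}$: $\Tr(G) \simeq |B_\bullet(G, G \otimes G^{\op}, G)|$. By the standard cyclic collapse for Hochschild homology (which uses the $G^e$-bimodule structure on $G$ to pair the outer and middle factors), this is equivalent to the geometric realization of the cyclic bar construction $B^{\textup{cyc}}_\bullet G$ with $B^{\textup{cyc}}_n G = G^{n+1}$, inner face maps given by multiplying adjacent coordinates, and cyclic last face $d_n(g_0, \ldots, g_n) = (g_n g_0, g_1, \ldots, g_{n-1})$. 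Since $G$ is grouplike, this cyclic bar construction is a standard model for the free loop space of the delooping, giving $\Tr(G) \simeq \mathcal{L}(BG)$.

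For the right-hand side, I would use the monodromy interpretation of $\mathcal{L}(BG)$: a map $S^1 \to BG$ classifies a $G$-torsor on $S^1$, determined by its monodromy $g \in G$ up to conjugation and with automorphism space $BC_G(g)$ (the centralizer of $g$); summing over conjugacy classes yields $\Map(S^1, BG) \simeq \coprod_{[g]} BC_G(g) \simeq G \times_G EG = G/G$. Composing the two equivalences proves the proposition.

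The main technical content lies in the left-hand identification $\Tr(G) \simeq \mathcal{L}(BG)$. I would make it explicit by constructing a simplicial map from $B^{\textup{cyc}}_\bullet G$ to the nerve $N_\bullet[G/G^{\ad}]$ of the adjoint action groupoid — both have $G^{n+1}$ at simplicial level $n$, after a suitable reindexing — and verifying compatibility with all face and degeneracy operators (the inner bar faces correspond to composing consecutive conjugations, while the cyclic face $d_n$ corresponds to rebasing the basepoint and dropping the last arrow). This is a classical but slightly fiddly check. More abstractly, one can invoke factorization homology: $\Tr(A) \simeq \int_{S^1} A$, and for any grouplike $E_1$-algebra $G$ one has $\int_{S^1} G \simeq \Map(S^1, BG)$.
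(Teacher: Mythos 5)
Your route is genuinely different from the paper's. The paper's proof is a one-line bimodule manipulation entirely internal to $\mathscr{S}$: it observes that $G\simeq (G\otimes G^{\op})\otimes_G \bullet$ as a $G\otimes G^{\op}$-module (inducing the trivial module along the diagonal $G\to G\otimes G^{\op}$), so that $\Tr(G)=G\otimes_{G\otimes G^{\op}}G\simeq G\otimes_G\bullet\simeq G/G$, with the residual $G$-action visibly being conjugation. This works verbatim for any topological group and never mentions $BG$. Your proof instead factors through the free loop space: $\Tr(G)\simeq$ cyclic bar construction $\simeq \Map(S^1,BG)$ (nonabelian Poincar\'e duality / factorization homology for grouplike $E_1$-algebras), and then identifies $\Map(S^1,BG)$ with the adjoint quotient. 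That is a valid and illuminating perspective (it makes the relation to loop spaces, used implicitly elsewhere in the paper's remarks, explicit), but it is heavier machinery for a statement the paper disposes of by base change.

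There is, however, one step in your argument that does not hold at the stated level of generality. The proposition is asserted for an arbitrary topological group $G$, and your identification of the right-hand side uses the monodromy decomposition $\Map(S^1,BG)\simeq\coprod_{[g]}BC_G(g)$. That decomposition is correct only for discrete $G$ (as for the reflection groups in the application); for a topological group it fails — e.g.\ for $G=\RR$ or $G=\Ug(1)$ it would produce an uncountable disjoint union of components, whereas $\mathcal{L}(BG)$ and $G\times_G EG$ are connected in the first case and have $\pi_0=\ZZ$-free description $\Ug(1)\times B\Ug(1)$ in the second. The repair is standard: identify $\mathcal{L}(BG)\simeq BG\times_{BG\times BG}BG$ and compute the fiber product (equivalently, use the fibration $G\to\mathcal{L}(BG)\to BG$ whose monodromy is conjugation), which yields $G\times_G EG$ directly without any decomposition into conjugacy classes — and which is, in essence, the same collapse the paper performs algebraically. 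With that substitution your argument is complete; as written, the conjugacy-class step restricts it to discrete groups.
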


\begin{proof}
	We have an isomorphism  $G \simeq G \otimes G^{op} \otimes_G \bullet$ as $G \otimes G^{op}$ modules, where $G$ maps to $G \otimes G^{op}$ diagonally. Then
	$G \otimes_{G \otimes G^{op}} G \simeq G\otimes_{G \otimes G^{op}} (G \otimes G^{op} \otimes_G \bullet) \simeq G \otimes_G \bullet \simeq G/G$, where the action of $G$ on $G$ is the conjugation.
\end{proof}	

\subsection{Topological groupoid and open descent}
We denote a \textit{topological groupoid} $\mathcal{G}$ to be the data consist of a discrete group $G$ acting properly discontinuously on a topogical space $Y$. And we use the notation $\mathcal{G}=[Y/G]$. Let $\mathcal{G}'=[Y'/G']$ be another topological groupoid. A $morphism$  $F :\mathcal{G} \to \mathcal{G}'$ consists of the data $(f, \varphi)$, where $f:Y \to Y'$ continous maps and $\varphi : G \to G'$ injective homomorphisms, such that $ f(a \cdot y)=  \varphi(a) \cdot f(z), $ for all $a \in G, y \in Y$.  We denote \textup{TopGrpd} the category of topological groupoids. A morphism $F$ is an \textit{open embedding} if the induced map $ Y \times_{G} G' \to  Y'$  is an open embedding. We denote $\us{Y}$ the underline set of $Y$, and by $ \us{\mathcal{G}} := \us{Y}/G \in \mathscr{S}$ the underline $\infty$-groupoid of points (recall that $G$ is assumed to be discrete). And define $\mathcal{G}_h:= Y/G \in \mathscr{S}$ the homotopy type of $\mathcal{G}$.

\begin{prop}
	\label{open embedding}
	Let $F : \mathcal{G} \to \mathcal{G}'$, assume that $f$ is open embeddings and the induced map $\us{F}:\us{\mathcal{G}} \to \us{\mathcal{G}}'$ is fully-faithful. Then $F$ is an open embedding.
\end{prop}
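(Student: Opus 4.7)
The plan is to verify directly that the induced map
$$
\pi : Y \times_G G' \longrightarrow Y', \qquad [y,g']\longmapsto g'\cdot f(y)
$$
is continuous, injective, and open; a map with these three properties is automatically a homeomorphism onto an open subset, hence an open embedding. Continuity is immediate from the construction, so the work is to establish injectivity and openness.

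For injectivity, suppose $g_1'\cdot f(y_1) = g_2'\cdot f(y_2)$ in $Y'$. Setting $g' := (g_2')^{-1} g_1' \in G'$, the equality rewrites as $g'\cdot f(y_1) = f(y_2)$, i.e.\ $g'$ is a morphism $f(y_1)\to f(y_2)$ in the underlying groupoid $\underline{\mathcal{G}}'$. Full-faithfulness of $\underline{F}$ then supplies a (unique) $a\in G$ with $a\cdot y_1 = y_2$ and $\varphi(a) = g'$. A short check with the defining $G$-action $a\cdot(y,g') = (ay,\, g'\varphi(a)^{-1})$ shows $a\cdot(y_1,g_1') = (y_2,g_2')$, so $[y_1,g_1'] = [y_2,g_2']$ in $Y\times_G G'$. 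This step is where the fully-faithful hypothesis is used, and it is the main (and really only) content of the proof.

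For openness, let $q : Y\times G' \to Y\times_G G'$ denote the quotient map; because $G$ acts by homeomorphisms, $q$ is open. The composition $\pi\circ q : Y\times G' \to Y'$ factors as
$$
Y\times G' \xrightarrow{\ f\times\id\ } Y'\times G' \xrightarrow{\ (y',g')\mapsto g'\cdot y'\ } Y'.
$$
The first map is open since $f$ is an open embedding and $G'$ carries the discrete topology, and the second map is open since $G'$ is discrete and each $g'\in G'$ acts as a homeomorphism on $Y'$. Hence $\pi\circ q$ is open; together with $q$ being an open surjection this gives openness of $\pi$.

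The hardest step is injectivity, which is exactly what fully-faithfulness of $\underline{F}$ is designed to provide; the openness and continuity are essentially formal consequences of $f$ being an open embedding and $G'$ being discrete.
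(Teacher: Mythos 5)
Your proof is correct and follows essentially the same route as the paper: both reduce to showing the base-changed map $Y\times_G G'\to Y'$ is injective (via fully-faithfulness of $\us{F}$) and open (via $f$ being an open embedding and $G,G'$ being discrete). You simply carry out by hand the element-level check that the paper packages as ``fully-faithful maps of groupoids are stable under base change'' together with the local-homeomorphism observation.
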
	
\begin{proof} The base changed map $Y \times_G G' \to Y'$ is a local homeomorphism since $G$ and $G'$ are discrete. The underline set map $\us{Y} \times_G G' \to \us{Y}'$ is fully-faithful (i.e injective), because fully-faithful map between groupoid are stable under base change. These imply the map $Y \times_G G' \to Y'$ is an open embedding.
	
\end{proof}	

For $\mathscr{I}$ a category, we denote $\mathscr{I}^\triangleright$ the category by adding one final object $*$ to $\mathscr{I}$. We say a functor $K: \mathscr{I}^\triangleright \to \mathscr{T}$ is a \textit{colimit diagram} if the induced map $\colim K|_\mathscr{I} \to K(*)$ is an isomorphism.

\begin{prop}[$\infty$-categorical Seifert-van Kampen theorem for topological groupoids] 
	\label{colimit} 
Let $K: \mathscr{I}^\triangleright \to \textup{TopGrpd} $ be a functor, assume that all arrows in $\mathscr{I}^\triangleright$ go to open embeddings, and the induced functor $\us{K}: \mathscr{I}^\triangleright \to \mathscr{S}$ is a colimit diagram. Then the induced functor $K_h:\mathscr{I}^\triangleright \to \mathscr{S}$ is a colimit diagram.
\end{prop}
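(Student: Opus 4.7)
The plan is to reduce to Lurie's $\infty$-categorical Seifert-van Kampen theorem for topological spaces, applied to an explicit open cover of the underlying space $Y$, where I write $K(*) = [Y/G]$ and $K(i) = [Y_i/G_i]$. For each $i \in \mathscr{I}$ set $U_i := Y_i \times_{G_i} G$; since $K(i) \to K(*)$ is an open embedding by hypothesis, $U_i$ is identified with an open $G$-invariant subset of $Y$. A short manipulation with homotopy quotients, using that the restriction of $EG$ along the injection $\varphi_i \colon G_i \hookrightarrow G$ is a contractible free $G_i$-space, yields $U_i/G \simeq Y_i/G_i = K_h(i)$. Since homotopy quotient by $G$ is itself a colimit, it commutes with $\colim_i$, so it is enough to prove $Y \simeq \colim_{i \in \mathscr{I}} U_i$ in $\mathscr{S}$; applying $(-)/G$ afterwards recovers $Y/G \simeq \colim_i K_h(i)$.

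To obtain $Y \simeq \colim_i U_i$ I appeal to Lurie's $\infty$-Seifert-van Kampen theorem, which reduces the statement to verifying that for each $y \in Y$ the full subcategory $\mathscr{I}_y := \{i \in \mathscr{I} : y \in U_i\}$ has weakly contractible classifying space. The main step is to deduce this directly from the hypothesis that $\us{K}\colon \mathscr{I}^\triangleright \to \mathscr{S}$ is a colimit diagram. A direct computation identifies the fiber of $\us{K}(i) = \us{Y_i}/G_i \to \us{Y}/G = \us{K}(*)$ over $y$ with the groupoid of pairs $(y_i,g) \in Y_i \times G$ satisfying $g f_i(y_i) = y$, modulo the $G_i$-action $a \cdot (y_i,g) = (a y_i, g\varphi_i(a)^{-1})$. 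Since $U_i \hookrightarrow Y$ is an open embedding (hence injective) there is at most one $G_i$-orbit of such pairs, and since $\varphi_i$ is injective, any such pair has trivial $G_i$-stabilizer. Hence this fiber is empty when $y \notin U_i$ and contractible when $y \in U_i$.

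I then invoke universality of colimits in the $\infty$-topos $\mathscr{S}$: pulling the colimit diagram $\us{K}$ back along the point $y \colon * \to \us{K}(*)$ yields a colimit diagram of fibers, so
$$\{y\} \simeq \colim_{i \in \mathscr{I}}\big(\us{K}(i) \times_{\us{K}(*)} \{y\}\big) \simeq \colim_{i \in \mathscr{I}_y} *,$$
and the right-hand side is the classifying space of $\mathscr{I}_y$. This forces $\mathscr{I}_y$ to be weakly contractible; in particular $\mathscr{I}_y$ is non-empty, so $\{U_i\}$ does cover $Y$, and Lurie's theorem applies to give $Y \simeq \colim_i U_i$. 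Taking homotopy $G$-quotients then delivers the claim. The main obstacle is precisely this extraction of contractibility of $\mathscr{I}_y$ from the hypothesis on $\us{K}$: it requires both universality of colimits in $\mathscr{S}$ and the observation that the fibers of $\us{K}(i) \to \us{K}(*)$ are $(-1)$-truncated, thanks to the open-embedding condition combined with injectivity of $\varphi_i$; the remaining ingredients are routine.
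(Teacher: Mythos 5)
Your proof is correct and follows essentially the same route as the paper: base-change to the induced open $G$-subsets $U_i = Y_i\times_{G_i}G$ of $Y$, apply Lurie's Seifert--van Kampen theorem (HA, Theorem A.3.1), and translate its hypothesis $(*)$ into the assumption that $\us{K}$ is a colimit diagram. Your fiber computation via universality of colimits in $\mathscr{S}$ is a welcome expansion of the step the paper dispatches with the one-line remark that condition $(*)$ ``is equivalent to the condition on $\us{K}$.''
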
	

\begin{proof}
By base change, one can assume that $K$ takes value in $\textup{Top}$ the category of topological spaces. Then this is the $\infty$-categorical Seifert-van Kampen theorem \cite[Theorem A.3.1]{Lur12}. Note that the condition $(*)$ \textit{loc. cit.} is equivalent to the condition on $\us{K}$.
\end{proof}	

\begin{rmk}[Topological groupoids as topological stacks] Denote \textup{Top} the category of topological space with continuous map. One can define a topogical stack as a functor $X: \textup{Top} \to \mathscr{S}$, satisfying certain descent and representibility conditions. Then Yoneda embedding gives $\iota: \text{Top} \hookrightarrow \text{TopStack}$. One can define embedding $\iota':\text{TopGrpd} \to \text{TopStack}$, via $[Y/G] \to \colim_{\bullet \in \Delta^{op}} \iota(G^{\times \bullet} \times Y).$ In this case, $\us{\calG} = \iota'(\calG)(*)$ and $\calG_h$ is also the homotopy type of $\iota'(\calG)$. Proposition~\ref{colimit} is most naturally presented in the context of topological stacks (with local homeomorphisms), but we shall not use this generality.
\end{rmk}	

\section{Proof of main theorem}

	For any $w \in W$, let $X^w$ be the fixed locus of $w$. For $I$ a facet, put $X_I^w := X_I  \cap X^w $.  Let $W^f := \{w \in W : w(I)=I, \text{for some facet } I\}$. We define a funtor $K: \mathscr{F}^{op,\triangleright}_{C} \to \textup{TopGrpd},$ by $I \mapsto [(\coprod_{ w \in W_I} X^w_I)/W_I]$, and $* \mapsto [(\coprod_{w \in W^f} X^w)/W]$. 
		
\begin{lem}
	\label{groupoid}
	$\us{K(I)}= \coprod_{\{J| C \geq J \geq I\}} (\us{J} \times W_J)/W_J$, and $\us{K(*)}= \coprod_{ \{ J|C \geq J\}} (\us{J} \times W_J)/W_J.$
\end{lem}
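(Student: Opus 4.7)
The plan is to prove both identities simultaneously by decomposing the coproduct of fixed loci according to facets, and then taking the $W_I$- (resp.\ $W$-) quotient via orbit-stabilizer using the fundamental-domain property of Proposition~\ref{fundamental domain}.

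The key geometric input is the assertion: for every facet $J$ of $X$, the set-wise stabilizer of $J$ in $W$ coincides with the pointwise stabilizer $W_J$. Granting this, each fixed locus $X^w$ is a disjoint union of facets, namely $X^w = \coprod_{\{J\,:\, w \in W_J\}} J$. To verify the set-wise $=$ pointwise claim, it suffices to take $J \leq C$; if $uJ = J$, then for any $y \in J \subset \overline{C}$, the point $uy \in J \subset \overline{C}$ lies in the $W$-orbit of $y$, so $uy = y$ by Proposition~\ref{fundamental domain}(2). I will also use that $I \leq J$ implies $W_J \subseteq W_I$, because $W_J$ fixes $\overline{J} \supset I$ pointwise by continuity.

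For the identity on $\us{K(I)}$, exchanging the order of summation gives
$$\coprod_{w \in W_I} X^w_I \;=\; \coprod_{w \in W_I} \coprod_{\{J \geq I\,:\, w \in W_J\}} J \;=\; \coprod_{J \geq I} J \times W_J,$$
$W_I$-equivariantly for the action $u\cdot(x,w) = (ux, uwu^{-1})$. By Proposition~\ref{fundamental domain}(4), $X_I \cap \overline{C}$ is a fundamental domain for $W_I$ on $X_I$, so the $W_I$-orbits of facets $J \geq I$ are in bijection with $\{J : C \geq J \geq I\}$, each with stabilizer $W_J$ by the previous paragraph. Orbit-stabilizer therefore yields
$$\us{K(I)} \;\simeq\; \coprod_{\{J\,|\, C \geq J \geq I\}} (\us{J} \times W_J)/W_J,$$
as required. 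The computation of $\us{K(*)}$ is verbatim identical, with $W_I$ replaced by $W$ and $X_I$ replaced by $X$; Proposition~\ref{fundamental domain}(2) then parametrizes $W$-orbits of all facets by $\{J : C \geq J\}$.

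The only genuine subtlety is the set-wise vs.\ pointwise stabilizer statement for facets, which is what simultaneously lets us decompose $X^w$ into facets and identify stabilizers in orbit-stabilizer; beyond this, the argument is bookkeeping in regrouping a double coproduct and applying orbit-stabilizer to a discrete group action.
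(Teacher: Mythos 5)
Your proof is correct and follows essentially the same route as the paper: decompose each fixed locus into facets, swap the double coproduct using $W_J\subseteq W_I$ for $J\geq I$, and then identify the quotient by parametrizing orbits of facets via the fundamental domain of Proposition~\ref{fundamental domain}. The only difference is that you make explicit two points the paper leaves implicit — the set-wise vs.\ pointwise stabilizer fact for facets and the orbit-stabilizer regrouping of the quotient — which is a welcome clarification rather than a new approach.
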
		

\begin{proof}
	As sets, we have ${X^w_I} = \coprod_{\{J| J \geq I, w(J)=J\}} J,$ and ${\coprod_{w \in W_I} X^w_I} =  \coprod_{w \in W_I} \coprod_{\{J| J \geq I, w(J)=J\}} J  \\
	= \coprod_{\{(J,w)|J \geq I, w(J)=J, w \in W_I \} } {J \times \{w\}} = \coprod_{\{(J,w)|J \geq I, w \in W_J\}} {J \times \{w\}} =\coprod_{\{J| J \geq I\}} J \times W_J$. Hence  $\us{K(I)}=(\coprod_{\{J| J \geq I\}} \us{J} \times W_J)/W_I = \coprod_{\{J| C \geq J \geq I\}} (\us{J} \times W_J)/W_J,$ where the last equality is by Proposition~\ref{fundamental domain} (2). The second statement follows from similar argument.
\end{proof}	

\begin{lem}
	\label{functor K}
	\begin{enumerate}
		\item For any $I' \to I$ in $\mathscr{F}^{op,\triangleright}_{C}$, $\us{K(I')} \to \us{K(I)}$ is fully-faithful.
		\item $\us{K}$ is a colimit diagram.
	\end{enumerate}	
\end{lem}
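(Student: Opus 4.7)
The plan is to use the explicit coproduct description of $\us{K(I)}$ and $\us{K(*)}$ given by Lemma \ref{groupoid}, reducing both parts to elementary manipulations of posets and coproducts of $\infty$-groupoids. For part (1), a non-identity arrow $I' \to I$ in $\mathscr{F}_{C}^{op,\triangleright}$ is either dual to an arrow $I \leq I'$ in $\mathscr{F}_C$, or else is the unique arrow $I' \to *$. In either situation the indexing set for $\us{K(I')}$ is contained in that for $\us{K(I)}$: $\{J : C \geq J \geq I'\} \subseteq \{J : C \geq J \geq I\}$ (respectively $\subseteq \{J : C \geq J\}$), because $I \leq I'$ and $J \geq I'$ imply $J \geq I$. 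I would then verify that on each $J$-component appearing on both sides, the induced map $(\us{J} \times W_J)/W_J \to (\us{J} \times W_J)/W_J$ is the identity: a representative $(x, w) \in J \times W_J$ in $\us{K(I')}$ maps to the same pair inside $X^w_I$ (respectively $X^w$), and the quotients by $W_J$ on both sides agree. Since an inclusion of coproduct summands by identities on each summand is fully-faithful in $\mathscr{S}$, this proves (1).

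For part (2), I exploit that colimits in $\mathscr{S}$ commute with coproducts, computing $\colim_{I \in \mathscr{F}_C^{op}} \us{K(I)}$ component-by-component in $J$. For each fixed $J \leq C$, define $B_J : \mathscr{F}_C^{op} \to \mathscr{S}$ by $B_J(I) = (\us{J} \times W_J)/W_J$ when $I \leq J$ and $B_J(I) = \emptyset$ otherwise, with transition maps being identities on the non-empty part. A direct cocone argument shows
$$\colim_{I \in \mathscr{F}_C^{op}} B_J(I) \simeq \colim_{I \in \{I' \leq J\}^{op}} (\us{J} \times W_J)/W_J,$$
since the $I$ with $I \not\leq J$ map into $\emptyset$ and impose no constraints on cocones under $B_J$. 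The category $\{I \leq J\}^{op}$ has an initial object (namely $J$, the maximum of $\{I \leq J\}$), so its classifying space is contractible and the colimit of the constant diagram is simply $(\us{J} \times W_J)/W_J$. Reassembling the coproduct over $J \leq C$ recovers $\us{K(*)}$ as described by Lemma \ref{groupoid}, and the cocone-induced comparison map is an equivalence.

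The main obstacle is bookkeeping rather than conceptual: confirming that the identifications of Lemma \ref{groupoid} are compatible with the originally-defined transition maps of $K$, and tracking the canonical comparison map component-by-component. The key combinatorial fact that makes everything collapse is that, for each face $J \leq C$, the slice $\{I \leq J\}$ has a maximum (namely $J$ itself), so the relevant indexing category is contractible — a triviality about the face poset, but the one the whole proof pivots on.
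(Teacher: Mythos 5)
Your proof is correct and follows essentially the same route as the paper: part (1) is the same observation that the map is the inclusion of indexing sets (summand-by-summand the identity), and your $B_J$ is exactly the paper's auxiliary functor $\us{K}_J$, with the colimit computed componentwise in $J$. The only (harmless) difference is that you justify contractibility of the relevant index category via the initial object $J$ of $\{I \leq J\}^{op}$, whereas the paper invokes the contractibility of the geometric realization $|\mathscr{F}_J^{op}| \simeq \overline{J}$.
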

 \begin{proof}
 (1)  One check that under the identification in Lemma~\ref{groupoid}, the map $\us{K(I')} \to \us{K(I)}$ is induced by the inclusion of indexing sets ${\{J| C \geq J \geq I'\}} \to {\{J| C \geq J \geq I\}}$. \\
 (2) For any $J  \leq C$,  define $\us{K}_J : \mathscr{F}^{op,\triangleright}_{C} \to \mathscr{S}$ by 
 \begin{equation}
\us{K}_J(I):=
 \begin{cases}
 (\us{J} \times W_J)/W_J, & \text{if}\ I \leq J \\
 \emptyset, & \text{otherwise}
 \end{cases}
 \end{equation}
 We see that $\colim_{\mathscr{F}^{op}_{C} } \us{K}_J \simeq | \mathscr{F}^{op}_J | \times ((\us{J} \times W_J)/W_J) \simeq (\us{J} \times W_J)/W_J \simeq \us{K}_J(*) $. The second equivalence follows from the fact that the geometric realization $ |\mathscr{F}^{op}_J| \simeq \overline{J}$  is contractible. Hence $\us{K}_J$ is a colimit diagram, and $\us{K} \simeq \coprod_{J \leq C}\us{K}_J$ is also a colimit diagram.
 \end{proof}

\begin{proof}[Proof of Theorem~\ref{main}]	By Propsition~\ref{trace of a group}, it is equivalent to show the natural map $\colim_{I \in \mathscr{F}^{op}}  W_I/W_I \to W/W$ is fully-faithful.	
We claim the functor $K$ satisfies the assumption of Proposition~\ref{colimit}.	We first show that all arrows in $ \mathscr{F}^{op,\triangleright}_{C}$ goes to open embeddings. For any $I' \geq I $, the natural map $\coprod_{w \in W_{I'}}X^w_{I'} \to \coprod_{w \in W_I}X^w_{I}$ is an open embedding. Hence by Proposition~\ref{open embedding}, and Lemma~\ref{functor K} (1), $K(I') \to K(I)$ is an open embedding. And $\us{K}$ is a colimit diagram by Lemma~\ref{functor K} (2). Hence we conclude that $K_h$ is a colimit diagram by Proposition~\ref{colimit}. Now we have a commutative diagram in $\mathscr{S}$:
$$\xymatrix{
	\colim_{I \in \mathscr{F}^{op}_{C} } (\coprod_{w \in W_I} X_I^w)/ W_I   \ar[r]^-{\sim} \ar[d]^{\sim} & (\coprod_{w \in W^f} X^w) /W \ar[d]^{\sim} \\
	 \colim_{I \in \mathscr{F}^{op}_{C} } W_I / W_I \ar[r]^p \ar[rd]^q & W^f /W \ar@{^{(}->}[d]^i\\
	  &    W / W
}$$
In the top square, the top horizontal arrow is an equivalence from the definition of  $K_h$ being a colimit diagram. The two vertical arrows are given by $X^w_I (resp. \; X^w) \mapsto \{w\}$, hence they are equivalences since $X^w_I$ and $X^w$ are contractible. We conclude that $p$ is an equivalence. Now $i$ is fully-faithful by definition, hence $q$ is fully-faithfully.
\end{proof}

\section{Acknowledgements}
We would like to thank Roman Bezrukavnikov, Anna Felikson, Dragos Fratila, Sam Gunningham, Quoc Ho, Jacob Lurie, Nitu Kitchloo, David Nadler and Roman Travkin for helpful conversations. 
The author is grateful for the support of Prof. Tamas Hausel and the Advanced Grant “Arithmetic and Physics of Higgs moduli spaces” No. 320593 of the European Research Council.

\bibliographystyle{alpha}
\bibliography{paper}

\begin{thebibliography}{BZN16}

\bibitem[Bou02]{Bour}
Nicolas Bourbaki.
\newblock Lie groups and {Lie} algebras. chapters 4--6. translated from the
  1968 french original by andrew pressley. elements of mathematics, 2002.

\bibitem[BZN16]{BZN16}
David Ben-Zvi and David Nadler.
\newblock Betti geometric langlands.
\newblock {\em arXiv preprint arXiv:1606.08523}, 2016.

\bibitem[Li18]{Li2}
Penghui Li.
\newblock Derived categories of character sheaves.
\newblock {\em arXiv preprint arXiv:1803.04289}, 2018.

\bibitem[LN15]{LN}
Penghui Li and David Nadler.
\newblock Uniformization of semistable bundles on elliptic curves.
\newblock {\em arXiv preprint arXiv:1510.08762}, 2015.

\bibitem[Lur12]{Lur12}
Jacob Lurie.
\newblock Higher algebra, 2012.

\bibitem[Vin88]{Vin88}
Ernest~B Vinberg.
\newblock Geometry. ii, encyclopaedia of mathematical sciences, 29, 1988.

\end{thebibliography}

\end{document}